\newcommand{\Z}{\mathbb{Z}}
\newcommand{\SL}{\operatorname{SL}}
\newcommand{\SO}{\operatorname{SO}}
\newcommand{\Aut}{\operatorname{Aut}}
\newcommand{\Sym}{\operatorname{Sym}}
\newcommand{\Lie}{\operatorname{Lie}}
\newcommand{\diag}{\operatorname{diag}}
\newtheorem{lause}{Theorem}[section]
\newtheorem{lemma}[lause]{Lemma}
\newtheorem{prop}[lause]{Proposition}
\newtheorem*{lause*}{Theorem}
\theoremstyle{definition}
\theoremstyle{remark}
\newtheorem{remark}[lause]{Remark}
\newtheorem*{mot*}{Motivation}
\newtheorem*{acknow*}{Acknowledgements}
\numberwithin{equation}{section}
\begin{document}

\title[Exotic $2$-local subgroup in $E_7(q)$]{Structure of an exotic $2$-local subgroup in $E_7(q)$}  

\author{Mikko Korhonen}
\address{M. Korhonen, Shenzhen International Center for Mathematics, Southern University of Science and Technology, Shenzhen 518055, Guangdong, P. R. China}
\email{korhonen\_mikko@hotmail.com {\text{\rm(Korhonen)}}}
\thanks{}

\date{\today}

\begin{abstract}
Let $G$ be the finite simple group of Lie type $G = E_7(q)$, where $q$ is an odd prime power. Then $G$ is an index $2$ subgroup of the adjoint group $G_{\operatorname{ad}}$, which is also denoted by $G_{\operatorname{ad}} = \operatorname{Inndiag}(G)$ and known as the group of inner-diagonal automorphisms. It was proven by Cohen--Liebeck--Saxl--Seitz (1992) that there is an elementary abelian $2$-subgroup $E$ of order $4$ in $G_{\operatorname{ad}}$, such that $N_{G_{\operatorname{ad}}}(E)/C_{G_{\operatorname{ad}}}(E) \cong \operatorname{Sym}_3$, and $C_{G_{\operatorname{ad}}}(E) = E \times \operatorname{Inndiag}(D_4(q))$. Furthermore, such an $E$ is unique up to conjugacy in $G_{\operatorname{ad}}$. 

It is known that $N_G(E)$ is always a maximal subgroup of $G$, and $N_{G_{\operatorname{ad}}}(E)$ is a maximal subgroup of $G_{\operatorname{ad}}$ unless $N_{G_{\operatorname{ad}}}(E) \leq G$. In this note, we describe the structure of $N_{G}(E)$. It turns out that $N_G(E) = N_{G_{\operatorname{ad}}}(E)$ if and only if $q \equiv \pm 1 \mod{8}$.
\end{abstract}

\maketitle

\section{Introduction} 

The purpose of this note is to clear up the structure of a certain maximal subgroup that appears in a finite simple group of type $E_7$. Specifically, the maximal subgroup that we consider arises as the normalizer of a certain elementary abelian $2$-subgroup of order $4$ in $E_7(q)$, where $q$ is an odd prime power. By results of Cohen--Liebeck--Saxl--Seitz \cite{CLSS1992}, the normalizer of this subgroup is known in the inner-diagonal automorphism group $\operatorname{Inndiag}(E_7(q))$, which contains $E_7(q)$ as a subgroup of index $2$. 

To start with, we first recall the standard setup for groups of Lie type, following \cite{SteinbergNotesAMS} and \cite[Chapter 2]{GLS3}, which we also refer to for more details. Let $\mathbf{G}$ be a simple algebraic group over $K = \overline{\mathbb{F}_p}$, where $p > 2$ is a prime. Let $q$ be a power of $p$, and let $\sigma$ be the corresponding standard Frobenius endomorphism, acting as $\sigma: x_{\alpha}(c) \mapsto x_{\alpha}(c^q)$ on the root elements of $\mathbf{G}$.

Throughout we will assume that $\mathbf{G}$ is simple of adjoint type $E_7$. Let $\mathbf{G}_{sc}$ be the simply connected cover of $\mathbf{G}$. Then $\mathbf{G}_{\sigma} = \operatorname{Inndiag}(E_7(q))$, with derived subgroup $\mathbf{G}_{\sigma}' = E_7(q)$ as an index two subgroup. Furthermore here $\left(\mathbf{G}_{sc} \right)_{\sigma} / Z(\mathbf{G}_{sc})_{\sigma} \cong \mathbf{G}_{\sigma}'$, where $Z(\mathbf{G}_{sc})_{\sigma} = Z(\mathbf{G}_{sc})$ is cyclic of order $2$.

As proven by Cohen, Liebeck, Saxl, and Seitz in \cite{CLSS1992}, there is an elementary abelian $2$-subgroup $E = 2^2$ in $\mathbf{G}$, such that $N_{\mathbf{G}}(E)/C_{\mathbf{G}}(E) \cong \operatorname{Sym}_3$, and $C_{\mathbf{G}}(E) = E \times D_4$, where the $D_4$ factor is of adjoint type. Furthermore, such an $E$ is unique up to conjugacy in $\mathbf{G}$. (Another way to characterize $E$ up to conjugacy in $\mathbf{G}$ is that every involution in $E \setminus \{1\}$ is of type $A_7$, see Proposition \ref{prop:Eproperties}  (vi).)

It is observed in \cite{CLSS1992} that there is a $\sigma$-invariant conjugate of $E$, unique up to conjugacy in $\mathbf{G}_{\sigma}$, such that \begin{align*}C_{\mathbf{G}_{\sigma}}(E) &= E \times \operatorname{Inndiag}(D_4(q)), \\ N_{\mathbf{G}_{\sigma}}(E)/C_{\mathbf{G}_{\sigma}}(E) &= \operatorname{Sym}_3.\end{align*} Then since $N_{\mathbf{G}_{\sigma}}(E)$ acts transitively on $E \setminus \{1\}$, we have $E \leq \mathbf{G}_{\sigma}'$. Because the lift of the $D_4$ factor in $\mathbf{G}_{sc}$ is also of adjoint type, the $\operatorname{Inndiag}(D_4(q)) = \operatorname{P\Omega}_8^{+}(q).2^2$ factor is also contained in $\mathbf{G}_{\sigma}'$ (Lemma \ref{lemma:sigmaprimebasic}). Thus one of the following holds:
	\begin{enumerate}[\normalfont (i)]
		\item $N_{\mathbf{G}_{\sigma}'}(E) = C_{\mathbf{G}_{\sigma}}(E).\operatorname{Sym}_3$.
		\item $N_{\mathbf{G}_{\sigma}'}(E) = C_{\mathbf{G}_{\sigma}}(E).3$
	\end{enumerate}

In this note we will prove the following result, which corrects one of the entries from \cite[Table 1]{AnDietrich2016} --- see Remark \ref{remark:andietrich} below.

\begin{lause}\label{thm:mainthm}
Let $E = 2^2$ be an elementary abelian $2$-subgroup of $\mathbf{G}$ such that $E_{\sigma} = E$, $C_{\mathbf{G}_{\sigma}}(E) = E \times \operatorname{Inndiag}(D_4(q))$, and $N_{\mathbf{G}_{\sigma}}(E)/C_{\mathbf{G}_{\sigma}}(E) = \operatorname{Sym}_3$. Then $$N_{\mathbf{G}_{\sigma}'}(E) = \begin{cases} C_{\mathbf{G}_{\sigma}}(E).\operatorname{Sym}_3, & \text{ if } q \equiv \pm 1 \mod{8}. \\ C_{\mathbf{G}_{\sigma}}(E).3, & \text{ if } q \equiv \pm 3 \mod{8}.\end{cases}$$ In particular $N_{\mathbf{G}_{\sigma}'}(E) = N_{\mathbf{G}_{\sigma}}(E)$ if and only if $q \equiv \pm 1 \mod{8}$.
\end{lause}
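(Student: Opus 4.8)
\emph{Proof strategy.} The plan is to reduce the statement to a computation inside the principal $\SL_2$ of $\mathbf{G}_{sc}$. Since $C_{\mathbf{G}_{\sigma}}(E)\leq\mathbf{G}_{\sigma}'$ and $[\mathbf{G}_{\sigma}:\mathbf{G}_{\sigma}']=2$, the only issue is whether $N_{\mathbf{G}_{\sigma}}(E)\leq\mathbf{G}_{\sigma}'$. Let $\delta\colon\mathbf{G}_{\sigma}\to Z(\mathbf{G}_{sc})$ be the connecting homomorphism of the central isogeny $1\to Z(\mathbf{G}_{sc})\to\mathbf{G}_{sc}\to\mathbf{G}\to1$, so that $\delta(g)=\tilde g^{-1}\sigma(\tilde g)$ for any lift $\tilde g\in\mathbf{G}_{sc}$ of $g$; this is well defined because $\sigma$ acts trivially on $Z(\mathbf{G}_{sc})$, and $\ker\delta=\mathbf{G}_{\sigma}'$. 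As $\delta$ is a homomorphism vanishing on $C_{\mathbf{G}_{\sigma}}(E)$ (which lies in $\mathbf{G}_{\sigma}'$), it factors through $N_{\mathbf{G}_{\sigma}}(E)/C_{\mathbf{G}_{\sigma}}(E)\cong\Sym_3$, and is therefore either trivial or the sign character, the two cases being exactly (i) and (ii). So the theorem is equivalent to the assertion that $\delta(n)=1$ if and only if $q\equiv\pm1\bmod 8$, where $n\in N_{\mathbf{G}_{\sigma}}(E)$ is any element inducing a transposition on $E$ (such $n$ exists for every odd $q$, and $\delta(n)$ is independent of the choice since $\delta$ vanishes on $C_{\mathbf{G}_{\sigma}}(E)$ and on $E$).

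Next I would describe the preimage $\hat E\leq\mathbf{G}_{sc}$ of $E$. Each involution of $E$ is of type $A_7$, and such an involution has no lift of order $2$ in $\mathbf{G}_{sc}$: its lifts have order $4$ and square to the generator $z$ of $Z(\mathbf{G}_{sc})$. An abelian $\hat E$ is impossible --- a product of lifts of two distinct involutions of $E$ would then have order $2$, not $4$ --- so $\hat E$ is nonabelian of order $8$ with every noncentral element of order $4$, that is, $\hat E\cong Q_8$ with $z$ its unique involution. Moreover, since $E\leq\mathbf{G}_{\sigma}'$, every $t\in E$ has a $\sigma$-fixed lift (necessarily of order $4$), and two such lifts generate $\hat E$; hence $\sigma$ fixes $\hat E$ pointwise. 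I would then identify $\hat E$, up to $\mathbf{G}_{sc}$-conjugacy, with the copy of $Q_8$ inside a principal $\SL_2$ of $\mathbf{G}_{sc}$. The key point is that the principal $\SL_2$ of $\mathbf{G}_{sc}$, say $\mathbf A$, has centre equal to $Z(\mathbf{G}_{sc})$ --- a feature special to type $E_7$, reflecting that the half-sum of positive coroots lies outside the coroot lattice, so that its value at $-1$ is $z$ --- whence the unique conjugacy class of quaternion subgroups of $\mathbf A$ lies over a subgroup $2^2$ of $\mathbf{G}$; and an analysis of $\Lie(\mathbf{G})$ as an $\mathbf A$-module shows that the involutions of this $2^2$ are of type $A_7$, so by the uniqueness of $E$ it is $\mathbf{G}$-conjugate to $E$.

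Now take $\mathbf A$ to be $\sigma$-stable and containing $\hat E$, with $\sigma$ fixing $\hat E$ pointwise. Since $A_1$ has no nontrivial diagram automorphism, $\mathbf A_{\sigma}\cong\SL_2(q)$, and $\hat E\leq\mathbf A_{\sigma}$. Choose $\tilde n\in N_{\mathbf A}(\hat E)$ inducing a transposition on $\Out(Q_8)\cong\Sym_3$ --- concretely, an element of order $8$ with $\tilde n^2\in\hat E$. Because $\sigma$ fixes $\hat E$ pointwise, $\sigma(\tilde n)$ induces the same automorphism of $\hat E$ as $\tilde n$, so $\sigma(\tilde n)\in\tilde n\,C_{\mathbf A}(\hat E)=\{\tilde n,z\tilde n\}$; hence $\delta(n)=\tilde n^{-1}\sigma(\tilde n)$ equals $1$ if $\tilde n\in\mathbf A_{\sigma}$ and $z$ otherwise. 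Using once more that $\delta(n)$ is an invariant of the transposition-lift, it follows that $\delta(n)=1$ if and only if $\mathbf A_{\sigma}\cong\SL_2(q)$ contains an element normalizing its quaternion subgroup and inducing a transposition on $\Out(Q_8)$. By the classical description of the $2$-local structure of $\SL_2(q)$ --- the normalizer of a quaternion subgroup is the binary octahedral group $2.\Sym_4$ when $q\equiv\pm1\bmod 8$ and is $\SL_2(3)$ when $q\equiv\pm3\bmod 8$ --- this holds precisely when $q\equiv\pm1\bmod 8$. This proves the theorem, the final sentence being immediate.

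The main obstacle is the geometric content of the second paragraph: upgrading the $\mathbf A$-module decomposition of $\Lie(\mathbf{G})$ to the precise statements that the relevant centralizers are connected and of types $D_4$ and $A_7$, and hence that $\hat E$ is $\mathbf{G}_{sc}$-conjugate to the principal-$\SL_2$ copy of $Q_8$ --- together with the verification that a $\sigma$-stable principal $\SL_2$ containing $\hat E$, on which $\sigma$ acts trivially, can indeed be chosen. A secondary but essential point is that the $\SL_2(q)$-computation only involves $N_{\mathbf A_{\sigma}}(\hat E)$, whereas a transposition of $E$ might a priori be realized by some element of $\mathbf{G}_{\sigma}'$ lying outside $\mathbf A_{\sigma}$; it is exactly the independence of $\delta(n)$ from the chosen transposition-lift that allows the local computation inside $\SL_2(q)$ to determine $N_{\mathbf{G}_{\sigma}'}(E)$ entirely.
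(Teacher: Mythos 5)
Your route is genuinely different from the paper's. The paper works entirely with explicit Chevalley generators: it exhibits a torus element $g$ with $g^2=e$ realizing a transposition of $E$, and simply computes $\sigma(y')=(y')^{\pm q}$ for its order-$8$ lift $y'$, so that membership in $\mathbf{G}_{\sigma}'$ reads off as $q\equiv\pm1\bmod 8$. Your reduction via the homomorphism $\delta$, which factors through $N_{\mathbf{G}_{\sigma}}(E)/C_{\mathbf{G}_{\sigma}}(E)\cong\Sym_3$ and is therefore constant on transpositions, is correct and is implicitly what the paper exploits; replacing the explicit torus computation by the $2$-local structure of $\SL_2(q)$ inside a principal $\SL_2$ is conceptually attractive, since it explains the mod~$8$ condition as the classical quadratic-residue dichotomy for $2$. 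The identification of $\hat E$ with the quaternion subgroup of the principal $\SL_2$ is also sound: the exponents $1,5,7,9,11,13,17$ of $E_7$ are all odd, giving fixed-point dimension $63$ for the relevant involutions, and $2\rho^\vee(-1)=h_{\alpha_2}(-1)h_{\alpha_5}(-1)h_{\alpha_7}(-1)$ is exactly the paper's generator of $Z(\mathbf{G}_{sc})$.

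There are, however, two genuine gaps. First, you assert parenthetically that $C_{\mathbf{G}_{\sigma}}(E)\leq\mathbf{G}_{\sigma}'$; this is claim (i) of the paper's Theorem 5.1 and requires the argument that the adjoint $D_4$ factor lifts to a direct factor $Z(\mathbf{G}_{sc})\times D_4$ in $\mathbf{G}_{sc}$, so that its $\sigma$-fixed points admit $\sigma$-fixed lifts. Second, and more seriously, the step you flag --- producing a $\sigma$-stable principal $\SL_2$ containing the given $\hat E$ --- is the crux and does not follow from conjugacy over $K$ alone: the set of principal $\SL_2$'s containing $\hat E$ is not visibly a homogeneous space for a connected group, so Lang--Steinberg does not apply directly. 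The natural repair is to reverse the construction: take a split $\sigma$-stable principal $\SL_2$, a $Q_8$ inside its group of $\F_q$-points, and its image $F\cong 2^2$ in $\mathbf{G}$; then $F$ is pointwise $\sigma$-fixed, lies in $\mathbf{G}_{\sigma}'$, and has all involutions of type $A_7$. To conclude that $F$ is $\mathbf{G}_{\sigma}$-conjugate to the $E$ of the theorem you then need uniqueness of the $\mathbf{G}_{\sigma}$-class of such subgroups, which is precisely the $H^1(\sigma,N_{\mathbf{G}}(E)/N_{\mathbf{G}}(E)^\circ)$ analysis of the paper's Section 4 (the two pointwise-fixed classes $[(1)]$ and $[(1,2)(3,4)]$ are separated by containment in $\mathbf{G}_{\sigma}'$). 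So your approach does not avoid that analysis; it relocates it. With these two points supplied the argument closes, and the final computation needs only that $N_S(\hat E)\gneqq\hat E$ for $S$ a Sylow $2$-subgroup of $\SL_2(q)$ containing $\hat E$, i.e.\ that $16\mid q^2-1$, which is exactly $q\equiv\pm1\bmod 8$.
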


\begin{remark}\label{remark:maximality}
Let $X$ be an almost simple group with socle $L$ an exceptional group of Lie type. A \emph{local maximal subgroup} of $X$ is a maximal subgroup of the form $N_X(E)$, where $E$ is an elementary abelian $r$-group for some prime $r$. In \cite{CLSS1992}, Cohen--Liebeck--Saxl--Seitz prove that for a local maximal subgroup $M < X$ one of the following holds: (1) $M$ is parabolic; (2) $M$ is of maximal rank; (3) $(L,E)$ appears in \cite[Table 1]{CLSS1992}. 

In \cite{CLSS1992} the maximality of the local subgroups arising from $(L,E)$ in \cite[Table 1]{CLSS1992} was not proved (nor claimed). However, maximality is not too difficult to see from results that appeared later. For example, inclusion of $N_L(E)$ in an almost simple subgroup of $L$ can be ruled out by using \cite[Theorem 1]{LS1999angew}, \cite[Theorem 1]{LS1998trans}, and \cite[Theorem 2]{Lawther2014root}. 

To be clear, what is true is the following. For each pair $(L,E)$ in \cite[Table 1]{CLSS1992}, there exists $E \leq L$ which is uniquely determined up to conjugacy in $\operatorname{Inndiag}(L)$. Then $N_X(E)$ is a maximal subgroup of $X$ for all $L \leq X \leq N_{\Aut(L)}(E)L$. Note that if $X \not\leq N_{\Aut(L)}(E)L$, then $N_X(E)$ is contained in $X \cap N_{\Aut(L)}(E)L \lneqq X$ and is therefore not maximal.
\end{remark}

\begin{remark}\label{remark:andietrich}
It is claimed in \cite[Table 1]{AnDietrich2016} that $N_{\mathbf{G}_{\sigma}'}(E) = C_{\mathbf{G}_{\sigma}}(E).3$ holds in all cases, but as seen from Theorem \ref{thm:mainthm}, this equality holds only if $q \equiv \pm 3 \mod{8}$. This issue arises from the beginning of the proof of \cite[Lemma 6.3 f)]{AnDietrich2016}, which claims that \cite{CLSS1992} proves that $N_{\mathbf{G}_{\sigma}}(E)$ is maximal in $\mathbf{G}_{\sigma}$. However, as mentioned in Remark \ref{remark:maximality} above, no such statement is made in \cite{CLSS1992}. 

\end{remark}

\begin{acknow*}
Supported by NSFC grant 12350410360. The author would like to thank David Craven and Donna Testerman for helpful discussions and comments, and an anonymous referee for pointing out some misprints in an earlier version of this paper.
\end{acknow*}

\section{Notation and preliminaries}

\subsection{} Throughout the paper we use the notation as established in the introduction. Additionally, we use $I_d$ to denote a $d \times d$ identity matrix over $K$. If $A$ and $B$ are groups, then $G = A.B$ denotes that $G$ is an extension of $A$ and $B$, meaning that $G$ has a normal subgroup $N \cong A$ with quotient $G/N \cong B$. For an algebraic group $\mathbf{X}$, we denote by $\mathbf{X}^\circ$ the connected component of $X$.

\subsection{}\label{ss:involutions} We recall that in the adjoint group $\mathbf{G}$ of type $E_7$, there are three conjugacy classes of involutions. These are labelled by the structure of the centralizer, which for an involution in $\mathbf{G}$ has connected component of type $D_6A_1$, $E_6T_1$, or $A_7$. Another way to recognize the conjugacy class of an involution in $\mathbf{G}$ is by the dimension of its fixed point space on the adjoint module, as seen in the following table.

\begin{center}
\begin{tabular}{c|c}
Class     & Dimension of fixed point space on $\Lie(\mathbf{G})$ \\ \hline
$D_6A_1$  & $69$ \\
$E_6T_1$  & $79$ \\
$A_7$     & $63$
\end{tabular}
\end{center}

We also note that an involution of type $D_6A_1$ lifts to an involution in $\mathbf{G}_{sc}$, while involutions of type $E_6T_1$ and $A_7$ lift to an element of order $4$ in $\mathbf{G}_{sc}$.

For checking whether a given element $x \in \mathbf{G}_{\sigma}$ is contained in $\mathbf{G}_{\sigma}'$, we will typically use the following elementary observation.

\begin{lemma}\label{lemma:sigmaprimebasic}
Suppose that $x \in \mathbf{G}_{\sigma}$, and let $y \in \mathbf{G}_{sc}$ be a lift of $x$ into $\mathbf{G}_{sc}$. Then $x \in \mathbf{G}_{\sigma}'$ if and only if $y \in (\mathbf{G}_{sc})_{\sigma}$.
\end{lemma}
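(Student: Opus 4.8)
The plan is to deduce the equivalence from the behaviour of the isogeny $\pi \colon \mathbf{G}_{sc} \to \mathbf{G}$ on $\sigma$-fixed points, together with the observation that being $\sigma$-fixed is a property independent of the chosen lift. Write $Z = Z(\mathbf{G}_{sc}) = \ker \pi$, which in type $E_7$ is cyclic of order $2$. The whole argument rests on two facts: that $\pi$ carries $(\mathbf{G}_{sc})_{\sigma}$ \emph{onto} $\mathbf{G}_{\sigma}'$, and that the two preimages of $x$ are simultaneously $\sigma$-fixed or not; combining them gives the statement at once.

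First I would record how $\pi$ behaves on fixed points. The restriction of $\pi$ to $(\mathbf{G}_{sc})_{\sigma}$ has kernel $Z \cap (\mathbf{G}_{sc})_{\sigma} = Z_{\sigma}$, and as noted in the introduction $Z_{\sigma} = Z$ has order $2$ while $(\mathbf{G}_{sc})_{\sigma}/Z_{\sigma} \cong \mathbf{G}_{\sigma}'$. The point I would make explicit is that this isomorphism is realized by $\pi$ as an honest equality of subgroups of $\mathbf{G}_{\sigma}$: the image $\pi\bigl((\mathbf{G}_{sc})_{\sigma}\bigr)$ is generated by the images of the $\F_q$-points of the root subgroups, which is precisely $O^{p'}(\mathbf{G}_{\sigma}) = \mathbf{G}_{\sigma}' = E_7(q)$ by the standard theory of groups of Lie type (see \cite{SteinbergNotesAMS} and \cite[Chapter 2]{GLS3}). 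Hence $\pi\bigl((\mathbf{G}_{sc})_{\sigma}\bigr) = \mathbf{G}_{\sigma}'$, so that $x \in \mathbf{G}_{\sigma}'$ if and only if $x = \pi(w)$ for some $w \in (\mathbf{G}_{sc})_{\sigma}$, i.e.\ if and only if $x$ admits at least one $\sigma$-fixed lift.

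Next I would check that the choice of lift is irrelevant. Let $z$ be the nontrivial element of $Z$, so that the two lifts of $x$ are $y$ and $yz$. Since $Z$ has order $2$, every endomorphism of $\mathbf{G}_{sc}$ fixing the identity fixes $z$, so $\sigma(z) = z$ and $\sigma(yz) = \sigma(y)z$; therefore $\sigma(y) = y$ if and only if $\sigma(yz) = yz$. Consequently ``$x$ has a $\sigma$-fixed lift'' is equivalent to ``the given lift $y$ is $\sigma$-fixed'', that is, $y \in (\mathbf{G}_{sc})_{\sigma}$. Combining this with the previous paragraph yields the desired equivalence.

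I do not expect a genuine obstacle here, since the argument is essentially bookkeeping around the Lang--Steinberg exact sequence; the only points demanding care are to upgrade the abstract isomorphism $(\mathbf{G}_{sc})_{\sigma}/Z_{\sigma} \cong \mathbf{G}_{\sigma}'$ recorded in the setup to the equality $\pi\bigl((\mathbf{G}_{sc})_{\sigma}\bigr) = \mathbf{G}_{\sigma}'$ of subgroups of $\mathbf{G}_{\sigma}$, and to confirm that $\sigma$ acts trivially on $Z$ so that the lift may be chosen freely.
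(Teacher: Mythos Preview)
Your argument is correct and is essentially the same as the paper's: the paper's one-line proof simply invokes the fact that $(\mathbf{G}_{sc})_\sigma / Z(\mathbf{G}_{sc})_\sigma \cong \mathbf{G}_\sigma'$ with the isomorphism induced by the isogeny $\mathbf{G}_{sc} \to \mathbf{G}$, which is exactly the equality $\pi\bigl((\mathbf{G}_{sc})_\sigma\bigr) = \mathbf{G}_\sigma'$ you spell out, together with the (implicit in the paper, explicit in your write-up) observation that $\sigma$ fixes $Z$ so the choice of lift is irrelevant.
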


\begin{proof}
Follows from the fact that $\left(\mathbf{G}_{sc} \right)_{\sigma} / Z(\mathbf{G}_{sc})_{\sigma} \cong \mathbf{G}_{\sigma}'$, where the isomorphism is induced by an isogeny $\mathbf{G}_{sc} \rightarrow \mathbf{G}$.
\end{proof}

\section{Construction and uniqueness of \texorpdfstring{$E$}{E}}
 
\subsection{} 

For the proof of Theorem \ref{thm:mainthm}, we will first need an explicit construction of $E = \langle e,f \rangle$ in $\mathbf{G}$, which will be given in this section. We will also note that $E$ is characterized by the fact that every involution in $E \setminus \{1\}$ is of type $A_7$ (Proposition \ref{prop:Eproperties} (vi)).

\subsection{} 

Let $\alpha_1$, $\ldots$, $\alpha_7$ be the simple roots in the root system $\Phi$ of $E_7$, with standard Bourbaki labelling. 

For each $\alpha \in \Phi$ and $t \in K^{\times}$ recall the usual Chevalley generators \begin{align*} 
w_{\alpha}(t) & = x_{\alpha}(t) x_{-\alpha}(-t^{-1}) x_{\alpha}(t), \\ 
w_{\alpha} & = w_{\alpha}(1), \\ 
h_{\alpha}(t) & = w_{\alpha}(t) w_{\alpha}(1)^{-1}. 
\end{align*} Then we have a maximal torus $\mathbf{T}$ generated by $h_{\alpha_i}(c)$ with $1 \leq i \leq 7$ and $c \in K^\times$.

\subsection{} Let $\alpha_0 \in \Phi^+$ be the longest root. Then we have a standard subsystem subgroup $\mathbf{H}$ of type $A_7$, which corresponds to the subsystem of $\Phi$ with base $J = \{ -\alpha_0, \alpha_1, \alpha_3, \alpha_4, \alpha_5, \alpha_6, \alpha_7\}.$ Here $$\mathbf{H} = \langle \mathbf{T}, \mathbf{U}_{\alpha} : \alpha \in \Phi \cap \Z J \rangle,$$ where $\mathbf{U}_{\alpha} = \langle x_{\alpha}(t) : t \in K \rangle$ is the root subgroup corresponding to a root $\alpha$.

In a Chevalley group of type $A_n$ with simple roots $\{\beta_1, \ldots, \beta_n\}$ the center is generated by $\prod_{1 \leq i \leq n} h_{\beta_i}(c^i)$, where $c$ is a primitive $(n+1)$th root of unity. Thus if we let $\zeta \in K^\times$ be a primitive $8$th root of unity, the center of $\mathbf{H}$ is given by $$e = h_{-\alpha_0}(\zeta) h_{\alpha_1}(\zeta^2)h_{\alpha_3}(\zeta^3) \cdots h_{\alpha_7}(\zeta^7) = h_{\alpha_2}(-\zeta^2) h_{\alpha_5}(\zeta^2) h_{\alpha_7}(-\zeta^2) h_{\alpha_6}(-1).$$ We have $e^2 = h_{\alpha_2}(-1) h_{\alpha_5}(-1) h_{\alpha_7}(-1) = 1$, since the lift of $h_{\alpha_2}(-1) h_{\alpha_5}(-1) h_{\alpha_7}(-1)$ in $\mathbf{G}_{sc}$ generates $Z(\mathbf{G}_{sc})$.

It follows that $e$ is an involution and $\mathbf{H} \cong \SL_8(K) / \langle \zeta^2 I_8 \rangle$. Furthermore $\mathbf{H} = C_{\mathbf{G}}(e)^\circ$.

\subsection{}\label{ss:2p3} Next we define $f$ as an element of $\mathbf{G}$ which corresponds to the longest element of the Weyl group of $E_7$. Note that the longest element acts on $\Phi$ as $\alpha \mapsto -\alpha$. Let $$f = w_{\alpha_1} w_{\alpha_2} w_{\alpha_5} w_{\alpha_7} w_{\alpha_{37}} w_{\alpha_{55}} w_{\alpha_{61}},$$ where for $i > 7$, we denote by $\alpha_i$ the $i$-th positive root in $\Phi$, with respect to the ordering of roots of $E_7$ used by {\sc Magma} \cite{MAGMA}. Here specifically $\alpha_{37} = 1122100$, $\alpha_{55} = 1122221$, $\alpha_{61} = 1224321$, where $k_1k_2 \dots k_7$ denotes $\sum_{1 \leq i \leq 7} k_i \alpha_i$.

Now $f$ is a product of pairwise commuting Weyl group representatives $w_{\alpha}$. Since $w_{\alpha}^2 = h_{\alpha}(-1)$ for all $\alpha \in \Phi$, it is easily seen that $$f^2 = h_{\alpha_2}(-1)h_{\alpha_5}(-1)h_{\alpha_7}(-1) = 1,$$ so $f$ is an involution. 

Since $f$ acts as $-1$ on $\Phi$, it inverts the torus $\mathbf{T}$. Therefore $$fef^{-1} = h_{\alpha_2}(-1) h_{\alpha_5}(-1) h_{\alpha_7}(-1) e = e,$$ so $f$ centralizes $e$. We also see from the same calculation in $\mathbf{G}_{sc}$ that the lifts of $f$ and $e$ to $\mathbf{G}_{sc}$ do not commute, so $f \in C_{\mathbf{G}}(e) \setminus C_{\mathbf{G}}(e)^\circ$. Consequently $C_{\mathbf{G}}(e) = \mathbf{H} \rtimes \langle f \rangle$.

We also note that $$f x_{\alpha}(c) f^{-1} = x_{-\alpha}(-c)$$ for all $\alpha \in \Phi$ and $c \in K^\times$, so $f$ acts on $\mathbf{H} = C_{\mathbf{G}}(e)^\circ = \SL_8(K) / \langle \zeta^2 I_8 \rangle$ as the inverse-transpose automorphism. 

\subsection{} Now $$E := \langle e,f \rangle$$ is conjugate in $\mathbf{G}$ to the $2$-subgroup of $\mathbf{G}_{\sigma}$ that we are interested in. This is seen by the following proposition, which is for the most part established in the proof of \cite[Lemma 2.15]{CLSS1992}.

\begin{prop}\label{prop:Eproperties} The following hold:
	\begin{enumerate}[\normalfont (i)]
		\item Every involution in $E$ is of type $A_7$.
		\item $C_{\mathbf{G}}(E) = E \times D_4$, where the $D_4$ factor is of adjoint type.
		\item $N_{\mathbf{G}}(E)/C_{\mathbf{G}}(E) \cong \Sym_3$.
		\item $N_{\mathbf{G}}(E)/C_{\mathbf{G}}(E)$ acts faithfully on $E$, and by graph automorphisms on the $D_4$ factor in $C_{\mathbf{G}}(E)$.
		\item $N_{\mathbf{G}}(E)/N_{\mathbf{G}}(E)^\circ \cong \Sym_4$.
		\item Suppose that $F \leq \mathbf{G}$ is elementary abelian of order $4$ such that every element of $F \setminus \{1\}$ is an involution of type $A_7$. Then $F$ is $\mathbf{G}$-conjugate to $E$.
	\end{enumerate}
\end{prop}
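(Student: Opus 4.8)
The plan is to establish the six items essentially in the order listed, building each on the explicit construction of $E = \langle e, f\rangle$ given in the preceding subsections. For item (i), I would first note that $e$ is of type $A_7$ by construction, since $C_{\mathbf{G}}(e)^\circ = \mathbf{H}$ has type $A_7$. The element $f$ is a representative of the longest Weyl element; since $f$ inverts $\mathbf{T}$ and $fx_\alpha(c)f^{-1} = x_{-\alpha}(-c)$, one can compute the fixed point space of $f$ on $\Lie(\mathbf{G})$ directly — it should have dimension $63$, identifying $f$ as type $A_7$ via the table in \S\ref{ss:involutions}. For the third involution $ef$, I would use the fact that $f$ acts on $\mathbf{H} = \SL_8(K)/\langle \zeta^2 I_8\rangle$ as the inverse-transpose automorphism, so $ef$ corresponds (up to the central identification) to an element of $\SL_8(K).2$ whose fixed points are computable; alternatively one can compute $\dim C_{\Lie(\mathbf{G})}(ef)$ directly from the root-group action and check it equals $63$ as well. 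An even cleaner route: $N_{\mathbf{G}}(E)$ will be shown in (iii) to permute $E \setminus \{1\}$ transitively, which forces all three involutions into one $\mathbf{G}$-class, so really only one of them needs to be identified as type $A_7$ from scratch.

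For (ii), I would compute $C_{\mathbf{G}}(E) = C_{\mathbf{H} \rtimes \langle f\rangle}(f)$, using $C_{\mathbf{G}}(e) = \mathbf{H}\rtimes\langle f\rangle$ from \S\ref{ss:2p3}. Since $f$ acts on $\mathbf{H} \cong \mathrm{PSL}_8(K)$-type group as the inverse-transpose (graph) automorphism, its fixed subgroup in $\mathbf{H}$ is a group of type $C_4$ or $B_4$ depending on the sign; tracking the precise form of $f$ (the specific product of seven $w_\alpha$'s) pins down which, and its connected centralizer is the $D_4$ claimed — here one must be careful about the central quotient by $\langle \zeta^2 I_8\rangle$ and verify the $D_4$ is of adjoint type, as asserted. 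One then checks $C_{\mathbf{H}}(f)$ meets $\langle e, f\rangle$ in exactly $\langle e\rangle$ (or that the product decomposition $E \times D_4$ is direct), giving $C_{\mathbf{G}}(E) = E \times D_4$. The bulk of the real work in this proposition is this centralizer computation; everything downstream is comparatively formal. I would cite \cite[Lemma 2.15]{CLSS1992} for most of it as the excerpt suggests, filling in only what is needed for the explicit $f$.

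For (iii) and (iv): since $N_{\mathbf{G}}(E)$ acts on $E \cong 2^2$, there is a homomorphism $N_{\mathbf{G}}(E) \to \Aut(E) \cong \Sym_3$ with kernel $C_{\mathbf{G}}(E)$. To see surjectivity, I would exhibit explicit elements — a representative swapping $e \leftrightarrow f$ (available because all involutions in $E$ are $\mathbf{G}$-conjugate of type $A_7$, their centralizers are isomorphic, so some $g \in \mathbf{G}$ conjugates the pair appropriately and normalizes $E$) and one of order $3$ cycling the three involutions (which exists by the $\Sym_3$-symmetry of the $D_4$ Dynkin diagram: the triality automorphism of the $D_4$ factor, suitably realized inside $\mathbf{G}$, must permute the three $A_7$-subsystems and hence the three involutions). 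Faithfulness of the action on $E$ is immediate from the definition of the kernel; that $\Sym_3$ acts as graph automorphisms on $D_4$ then follows because the action on $E$ and on $D_4$ are linked — an element centralizing $D_4$ and normalizing $E$ lies in $C_{\mathbf{G}}(E \times D_4) = Z$, forcing the map $\Sym_3 \to \Out(D_4) \cong \Sym_3$ to be injective, hence an isomorphism.

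Finally, for (v) I would combine the above with the component group of $N_{\mathbf{G}}(E)^\circ$: we have $N_{\mathbf{G}}(E)^\circ = C_{\mathbf{G}}(E)^\circ = D_4$ (connected, adjoint), and $N_{\mathbf{G}}(E)/N_{\mathbf{G}}(E)^\circ$ is an extension of $N_{\mathbf{G}}(E)/C_{\mathbf{G}}(E) \cong \Sym_3$ by $C_{\mathbf{G}}(E)/C_{\mathbf{G}}(E)^\circ = C_{\mathbf{G}}(E)/D_4$; since $C_{\mathbf{G}}(E) = E \times D_4$, the latter quotient is $E \cong 2^2$. So $N_{\mathbf{G}}(E)/N_{\mathbf{G}}(E)^\circ$ is an extension $2^2.\Sym_3$, and I would identify it as $\Sym_4$ by checking the extension is non-split in the right way — equivalently, that the $\Sym_3$ acts on the normal $2^2$ as the standard (nontrivial) module and the extension class is the correct one; the realization inside $\mathbf{G}$ of the order-$3$ and transposition elements above makes this concrete, and comparing orders ($|2^2.\Sym_3| = 24 = |\Sym_4|$) plus the faithful action on $4$ points (the three involutions of $E$ together with the trivial element, or more naturally the $4$ "ends" of the extended $D_4$-type picture) identifies the group as $\Sym_4$.

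For (vi), the uniqueness statement, I would argue that if $F = \langle e', f'\rangle$ has all nontrivial elements of type $A_7$, then after conjugating we may assume $e' = e$, so $F \leq C_{\mathbf{G}}(e) = \mathbf{H}\rtimes\langle f\rangle$ with $\mathbf{H}$ of type $A_7$. The element $f'$ is then an involution in $\mathbf{H}\rtimes\langle f\rangle$ of type $A_7$ in $\mathbf{G}$ with $e f' $ also of type $A_7$; involutions in the coset $\mathbf{H} f$ that are of type $A_7$ in $\mathbf{G}$ form (up to $C_{\mathbf{G}}(e)$-conjugacy) a single class — this is a computation inside $\SL_8(K).2$ classifying graph-automorphism-type involutions by their fixed subgroups and checking which give type $A_7$ in the ambient $E_7$ — and those inside $\mathbf{H}$ itself cannot have $ee'$ of type $A_7$ with the right centralizer, so $f'$ is $C_{\mathbf{G}}(e)$-conjugate to $f$, giving $F$ conjugate to $E$.

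\medskip

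\emph{Main obstacle.} The hard part is the centralizer computation in (ii): correctly tracking how the specific element $f$ (the given product $w_{\alpha_1}w_{\alpha_2}w_{\alpha_5}w_{\alpha_7}w_{\alpha_{37}}w_{\alpha_{55}}w_{\alpha_{61}}$, acting as inverse-transpose on $\mathbf{H} = \SL_8(K)/\langle\zeta^2 I_8\rangle$) behaves through the central quotient, determining the exact isogeny type of $C_{\mathbf{H}}(f)^\circ$, and verifying that the resulting $D_4$ is genuinely of adjoint type rather than some other isogeny type. Closely related is pinning down the extension type in (v) as $\Sym_4$ rather than $2^2 \rtimes \Sym_3 = 2 \times \Sym_4$-fragments or another group of order $24$ — this requires either an explicit matrix realization of the normalizing elements or a careful cohomological argument, and it is where a careless treatment could go wrong. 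I would lean heavily on \cite[Lemma 2.15]{CLSS1992} and the triality structure of $D_4$ to keep these computations honest, supplementing only with the fixed-point-dimension checks from the table in \S\ref{ss:involutions} to verify the $A_7$-type claims independently.
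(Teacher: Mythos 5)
Your overall architecture matches the paper's proof: $e$ is of type $A_7$ by construction, $f$ is identified via its $63$-dimensional fixed space on $\Lie(\mathbf{G})$, item (ii) is delegated to \cite[Lemma 2.15]{CLSS1992} with the $D_4$ realized as the image of $\SO_8(K)$, item (v) is the order-$24$ count, and item (vi) proceeds by reducing to $C_{\mathbf{G}}(e)$ and splitting into the inner and outer cosets of $C_{\mathbf{G}}(e)^\circ$. However, there are a few concrete gaps. First, your ``cleaner route'' for showing $ef$ has type $A_7$ (via transitivity of $N_{\mathbf{G}}(E)$ on $E \setminus \{1\}$ from (iii)) is circular as you have set things up, because your construction of the transposition in (iii) is itself justified by the claim that all involutions in $E$ are conjugate; moreover ``$e$ and $f$ are conjugate, so some $g$ conjugates the pair appropriately and normalizes $E$'' does not follow -- an element with $geg^{-1}=f$ need not normalize $E$. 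The paper resolves both issues at once with a one-line trick you are missing: choose $g \in \mathbf{T}$ with $g^2 = e$; since $f$ inverts $\mathbf{T}$, one gets $gfg^{-1} = g^2 f = ef$, so $f \sim ef$ immediately, and this same explicit $g$ supplies the transposition in $N_{\mathbf{G}}(E)/C_{\mathbf{G}}(E)$ swapping $f$ and $ef$ (it is reused later in the paper, in \ref{ss:sigmaaction}).

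Two further points. In (ii), the fixed-point subgroup of the inverse-transpose involution on a group of type $A_7$ is of type $C_4$ or $D_4$ (symplectic or orthogonal forms in $8$ variables), not ``$C_4$ or $B_4$''; and in (iv) your step ``an element centralizing $D_4$ and normalizing $E$ lies in $C_{\mathbf{G}}(E \times D_4)$'' is invalid, since normalizing $E$ is weaker than centralizing it (the conclusion that $N_{\mathbf{G}}(E)/C_{\mathbf{G}}(E) \hookrightarrow \Out(D_4)$ is correct, but the paper simply cites the argument on \cite[p.~35]{CLSS1992} rather than reproving it). Finally, in (vi) your dismissal of the case $f' \in C_{\mathbf{G}}(e)^\circ$ (``cannot have the right centralizer'') is too vague to check; the paper's argument is sharper and worth adopting: such an $f'$ is the image of $\diag(\zeta I_a, -\zeta I_{8-a})$ or $\diag(I_a,-I_{8-a})$, so one of $f'$, $ef'$ lifts to an \emph{involution} in $\mathbf{G}_{sc}$, which is impossible for an involution of type $A_7$ since those lift to elements of order $4$ (as recorded in \ref{ss:involutions}). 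For the outer coset, the paper cites \cite[Proposition 2.7]{CLSS1992} for the fact that there are exactly two $C_{\mathbf{G}}(e)^\circ$-classes of involutions there, one of type $A_7$ and one of type $E_6T_1$, in place of the computation you sketch.
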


\begin{proof}
For (i), by construction $C_{\mathbf{G}}(e)^\circ$ is a subsystem subgroup of type $A_7$, so $e$ is an involution of type $A_7$. We have seen in \ref{ss:2p3} that $f$ inverts the maximal torus $\mathbf{T}$ and swaps the positive and negative root spaces on $\Lie(\mathbf{G})$. Thus the fixed point space of $f$ on $\Lie(\mathbf{G})$ has dimension $|\Phi|/2 = 63$, which by \ref{ss:involutions} implies that $f$ is an involution of type $A_7$. For the element $ef$, take $g \in \mathbf{T}$ to be an element in the maximal torus such that $g^2 = e$. Since $f$ inverts the maximal torus, we have $gfg^{-1} = g^2f = ef$, so $f$ is conjugate to $ef$.

For (ii), recall from \ref{ss:2p3} that $f$ acts on $C_{\mathbf{G}}(e)^\circ = \SL_8(K) / \langle \zeta^2 I_8 \rangle$ by the inverse-transpose graph automorphism. Then as observed in \cite[Lemma 2.15]{CLSS1992}, we have $C_{\mathbf{G}}(E) = E \times D_4$. Here the $D_4$ factor is the image of $\SO_8(K)$ in $C_{\mathbf{G}}(e)^\circ = \SL_8(K) / \langle \zeta^2 I_8 \rangle$, and is thus of adjoint type. 

Claim (iii) and (iv) follow from the argument in the first paragraph of \cite[p. 35]{CLSS1992}. For (v), by (ii) -- (iv) we have $N_{\mathbf{G}}(E)/N_{\mathbf{G}}(E)^\circ = E.\Sym_3$, with $\Sym_3$ acting faithfully on $E$. The only group of order $24$ with these properties is $\Sym_4 \cong E \rtimes \Sym_3$.

For claim (vi), let $F \leq \mathbf{G}$ such that $|F| = 4$ and every element of $F \setminus \{1\}$ is an involution of type $A_7$. By replacing $F$ with a conjugate, we can assume that $e \in F$, so $F = \langle e,f' \rangle$ for some involution $f'$ of type $A_7$.

Suppose first that $f' \in C_{\mathbf{G}}(e)^\circ = \SL_8(K) / \langle \zeta^2 \rangle$. Since $f'$ is an involution, up to conjugacy it is the image of a diagonal matrix $\diag(\lambda I_a, -\lambda I_{8-a})$ for some $\lambda \in \langle \zeta \rangle$. Hence $f'$ is the image of a diagonal matrix of the form $\diag(\zeta I_a, -\zeta I_{8-a})$ or $\diag(I_a, -I_{8-a})$. Then either $ef'$ or $f'$ lifts to an involution in $\mathbf{G}_{sc}$, but this cannot happen for an involution of type $A_7$, as noted in \ref{ss:involutions}.

Therefore we must have $f' \in C_{\mathbf{G}}(e) \setminus C_{\mathbf{G}}(e)^\circ$. By \cite[Proposition 2.7]{CLSS1992}, there are two $C_{\mathbf{G}}(e)^\circ$-classes of involutions in $C_{\mathbf{G}}(e) \setminus C_{\mathbf{G}}(e)^\circ$. First, we already know by (i) that $f \in C_{\mathbf{G}}(e) \setminus C_{\mathbf{G}}(e)^\circ$ is an involution of type $A_7$ in $\mathbf{G}$. Moreover, by the proof of \cite[Lemma 2.15]{CLSS1992} (third paragraph), there exists an involution of type $E_6T_1$ in $C_{\mathbf{G}}(e) \setminus C_{\mathbf{G}}(e)^\circ$. Thus $f'$ must be $C_{\mathbf{G}}(e)^\circ$-conjugate to $f$, which proves that $F$ is $\mathbf{G}$-conjugate to $E$.
\end{proof}

\begin{remark}
Proposition \ref{prop:Eproperties} (vi) is also a consequence of a result of Griess \cite[Proposition 9.5]{Griess}.
\end{remark}

\section{On \texorpdfstring{$\sigma$}{σ}-invariant conjugates of \texorpdfstring{$E$}{E}}

\subsection{} We have $\sigma(e) = e$ and $\sigma(f) = f$, so $E$ is $\sigma$-invariant and $E_{\sigma} = E$. In this section, we will consider the possibilities for $\sigma$-invariant conjugates of $E$. 

\subsection{}\label{ss:langsteinberg} Let $X$ be a group on which $\sigma$ acts. We denote by $H^1(\sigma, X)$ the equivalence classes of $X$ under the relation $\sim$ defined by $x \sim y$ if and only if $x = \sigma(g)^{-1}yg$ for some $g \in X$. 

Denote by $\mathcal{E}$ the set of $\sigma$-invariant $\mathbf{G}$-conjugates of $E$, and let $\mathcal{E}/\mathbf{G}_{\sigma}$ be the set of $\mathbf{G}_{\sigma}$-conjugacy classes in $\mathcal{E}$. Then by \cite[2.7]{SpringerSteinberg}, we have a bijection $$\mathcal{E}/\mathbf{G}_{\sigma} \rightarrow H^1(\sigma, N_{\mathbf{G}}(E)/N_{\mathbf{G}}(E)^\circ)$$ which maps $[\mathbf{X}^g]$ to the image of $\sigma(g)g^{-1}$ in $H^1(\sigma, N_{\mathbf{G}}(E)/N_{\mathbf{G}}(E)^\circ)$.

\subsection{} Thus the $\mathbf{G}_{\sigma}$-classes of $\sigma$-invariant conjugates of $E$ are parametrized by $H^1(\sigma, N_{\mathbf{G}}(E) / N_{\mathbf{G}}(E)^\circ)$, where $$N_{\mathbf{G}}(E) / N_{\mathbf{G}}(E)^\circ \cong E \rtimes \operatorname{Sym}_3 \cong \operatorname{Sym}_4$$ by Proposition \ref{prop:Eproperties} (v). 

\subsection{}\label{ss:2p6} Now $\sigma$ acts on $N_{\mathbf{G}}(E) / N_{\mathbf{G}}(E)^\circ$ with conjugation by some element of $N_{\mathbf{G}}(E)$, since every automorphism of $\operatorname{Sym}_4$ is inner. Note that $\sigma$ acts trivially on $E = E_{\sigma}$, so the action of $\sigma$ on $N_{\mathbf{G}}(E) / N_{\mathbf{G}}(E)^\circ$ is given with conjugation by some element $e' \in E$. 

Then the elements of $E$ map into two classes in $H^1(\sigma, N_{\mathbf{G}}(E)/N_{\mathbf{G}}(E)^\circ)$, corresponding to $\{e'\}$ and $E \setminus\{e'\}$. We will see that $e' = 1$ or $e' = f$, according to whether $q \equiv 1 \mod{4}$ or $q \equiv 3 \mod{4}$.

\subsection{} We have $f \in C_{\mathbf{G}}(f)^\circ$, so by the Lang-Steinberg theorem we can write $f = \sigma(x)x^{-1}$ for some $x \in C_{\mathbf{G}}(f)^\circ$. Then the $\sigma$-invariant conjugate of $E$ corresponding to $f$ (as in \ref{ss:langsteinberg}) is $$\widetilde{E} := x^{-1} E x = \langle \widetilde{e}, f \rangle,$$ where $\widetilde{e} = x^{-1} e x$. Note that $\widetilde{E}_{\sigma} = \widetilde{E}$.

For later use, we make two observations about the actions of $\sigma$ and $f$ on the centralizers $C_{\mathbf{G}}(e)^\circ$ and $C_{\mathbf{G}}(\widetilde{e})^\circ$.

\begin{lemma}\label{lemma:torusactions}
We have the following:
	\begin{enumerate}[\normalfont (i)]
		\item $\sigma$ acts on the maximal torus $\mathbf{T}$ of $C_{\mathbf{G}}(e)^\circ$ via $t \mapsto t^q$.
		\item $\sigma$ acts on the maximal torus $x^{-1} \mathbf{T} x$ of $C_{\mathbf{G}}(\widetilde{e})^\circ$ via $t \mapsto t^{-q}$.
	\end{enumerate}
\end{lemma}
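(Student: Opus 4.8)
\textbf{Proof proposal for Lemma~\ref{lemma:torusactions}.}
The plan is to exploit the explicit description of the Chevalley generators together with the defining action of $\sigma$ on root elements, namely $\sigma: x_\alpha(c) \mapsto x_\alpha(c^q)$. For part (i), recall that $\mathbf{T}$ is generated by the elements $h_{\alpha_i}(c)$ with $1 \leq i \leq 7$ and $c \in K^\times$, and each $h_\alpha(c)$ is built out of the $w_\alpha(t) = x_\alpha(t)x_{-\alpha}(-t^{-1})x_\alpha(t)$. Applying $\sigma$ term by term gives $\sigma(w_\alpha(t)) = w_\alpha(t^q)$, hence $\sigma(h_\alpha(c)) = h_\alpha(c)^q = h_\alpha(c^q)$ — the last equality because $c \mapsto h_\alpha(c)$ is a homomorphism $K^\times \to \mathbf{T}$. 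Since every element of $\mathbf{T}$ is a product of such $h_{\alpha_i}(c)$, we conclude $\sigma(t) = t^q$ for all $t \in \mathbf{T}$. This is the standard fact that the chosen maximal torus is split with respect to the standard Frobenius, so this part should be essentially immediate.

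For part (ii), the idea is to transport the computation through the element $x$. Recall from Section~\ref{ss:langsteinberg} and the paragraph preceding the lemma that $x \in C_{\mathbf{G}}(f)^\circ$ satisfies $f = \sigma(x)x^{-1}$, and $\widetilde{e} = x^{-1}ex$, so $C_{\mathbf{G}}(\widetilde{e})^\circ = x^{-1} C_{\mathbf{G}}(e)^\circ x$ contains the maximal torus $x^{-1}\mathbf{T}x$. For $t \in \mathbf{T}$, write $s = x^{-1}tx$; then
\begin{align*}
\sigma(s) &= \sigma(x)^{-1}\sigma(t)\sigma(x) = \sigma(x)^{-1} t^q \sigma(x).
\end{align*}
Now substitute $\sigma(x) = fx$, using $f = \sigma(x)x^{-1}$, to get $\sigma(s) = x^{-1} f^{-1} t^q f x = x^{-1} (f^{-1}t^q f) x$. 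Here is where the key input enters: we showed in \ref{ss:2p3} that $f$ inverts the maximal torus $\mathbf{T}$ (since $f$ acts as $-1$ on $\Phi$ and therefore as inversion on $\mathbf{T}$), so $f^{-1} t^q f = (t^q)^{-1} = (t^{-1})^q$. Hence $\sigma(s) = x^{-1}(t^{-1})^q x = (x^{-1}t^{-1}x)^q = s^{-q}$, which is exactly the claim.

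The two parts are genuinely routine once one has the explicit $f$ from \ref{ss:2p3} and the Lang--Steinberg element $x$ in hand; there is no real obstacle. The only point requiring a little care is bookkeeping with the conjugation-versus-$\sigma$ order, i.e.\ making sure one uses $f = \sigma(x)x^{-1}$ rather than $f = x^{-1}\sigma(x)$ or some other variant, and that $f^{-1} = f$ since $f$ is an involution; getting the sign/exponent right in part (ii) is the one place an error could creep in, so I would double-check the substitution $\sigma(x) = fx$ and the inversion identity $f t f^{-1} = t^{-1}$ for $t \in \mathbf{T}$ against the computation $fef^{-1} = e$ already recorded in \ref{ss:2p3}.
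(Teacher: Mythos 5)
Your proof is correct and follows essentially the same route as the paper's own (one-line) proof: part (i) from $\sigma(h_{\alpha}(c)) = h_{\alpha}(c^q) = h_{\alpha}(c)^q$, and part (ii) by transporting through $x$ using $f = \sigma(x)x^{-1}$ together with the fact that $f$ inverts $\mathbf{T}$. Your bookkeeping in (ii) is in fact slightly more precise than the paper's stated justification, which attributes the inversion of $\mathbf{T}$ to $x$ rather than to $f$.
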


\begin{proof}
Here (i) holds since $\sigma(h_{\alpha}(c)) = h_{\alpha}(c^q) = h_{\alpha}(c)^q$, and (ii) holds since $x$ inverts $\mathbf{T}$.
\end{proof}

\begin{lemma}\label{lemma:faction}
We have the following:
	\begin{enumerate}[\normalfont (i)]
		\item Identifying $\mathbf{T}$ as the maximal torus of $C_{\mathbf{G}}(e)^\circ$ corresponding to diagonal matrices, $f$ acts on $C_{\mathbf{G}}(e)^\circ$ as the inverse-transpose automorphism.
		\item Identifying $x^{-1}\mathbf{T}x$ as the maximal torus of $C_{\mathbf{G}}(\widetilde{e})^\circ$ corresponding to diagonal matrices, $f$ acts on $C_{\mathbf{G}}(\widetilde{e})^\circ$ as the inverse-transpose automorphism.
	\end{enumerate}
\end{lemma}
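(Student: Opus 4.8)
The plan is to deduce both parts from the explicit root-group computation already carried out in \ref{ss:2p3}, namely that $f x_{\alpha}(c) f^{-1} = x_{-\alpha}(-c)$ for all $\alpha \in \Phi$ and $c \in K^{\times}$, together with the fact that $f$ inverts $\mathbf{T}$.

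Part (i) is essentially a restatement of the last line of \ref{ss:2p3}, with the torus identification made precise. Fix the identification $\mathbf{H} = C_{\mathbf{G}}(e)^\circ \cong \SL_8(K)/\langle \zeta^2 I_8 \rangle$ arising from the $A_7$ subsystem with base $J$, chosen so that $\mathbf{T}$ corresponds to the diagonal torus and the root subgroups $\mathbf{U}_{\beta}$ ($\beta \in \Z J \cap \Phi$) correspond to the root subgroups $\{ I + c E_{ij} : c \in K \}$ of $\SL_8(K)$. Under this identification the inverse-transpose automorphism $g \mapsto (g^{-1})^\top$ sends $I + c E_{ij}$ to $I - c E_{ji}$ and inverts the diagonal torus. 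Comparing with \ref{ss:2p3}, this is exactly the effect of conjugation by $f$ on a generating set of $\mathbf{H}$, so $f$ induces the inverse-transpose automorphism, which proves (i).

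For part (ii), recall that $x \in C_{\mathbf{G}}(f)^\circ$, so $x$ commutes with $f$. Hence $\widetilde{E} = x^{-1} E x = \langle \widetilde{e}, f \rangle$ contains $f$, and since $\widetilde{E}$ is elementary abelian (being $\mathbf{G}$-conjugate to $E$), the element $f$ centralizes $\widetilde{e}$ and therefore normalizes $C_{\mathbf{G}}(\widetilde{e})^\circ = x^{-1} \mathbf{H} x$, with maximal torus $x^{-1} \mathbf{T} x$. Now transport the $\SL_8$-structure of $\mathbf{H}$ to $x^{-1} \mathbf{H} x$ by conjugating with $x^{-1}$; under the resulting identification $x^{-1} \mathbf{T} x$ corresponds to the diagonal torus, and for $h \in \mathbf{H}$ the relation $f x = x f$ gives
$$f (x^{-1} h x) f^{-1} = x^{-1} (f h f^{-1}) x,$$
so the action of $f$ on $x^{-1} \mathbf{H} x$ is carried by the transported identification to the action of $f$ on $\mathbf{H}$, which is the inverse-transpose automorphism by (i). This proves (ii).

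Both steps are computationally light; the only point needing attention is the bookkeeping of the two $\SL_8$-identifications, and in particular verifying that conjugation by $x^{-1}$ intertwines the $f$-action on $\mathbf{H}$ with the $f$-action on $x^{-1} \mathbf{H} x$ — which comes down precisely to $x$ commuting with $f$, i.e.\ to the Lang-Steinberg element $x$ having been chosen inside $C_{\mathbf{G}}(f)^\circ$.
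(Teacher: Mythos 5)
Your proposal is correct and follows exactly the paper's argument: part (i) is the root-group computation recorded in \ref{ss:2p3}, and part (ii) is deduced from (i) by transporting the structure via $x^{-1}$, using that $x$ centralizes $f$. You simply spell out the bookkeeping that the paper leaves implicit.
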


\begin{proof}
Here (i) was noted in \ref{ss:2p3}. Then (ii) follows from (i), since $x$ centralizes $f$.
\end{proof}

\subsection{} We will now see that $E$ and $\widetilde{E}$ are not $\mathbf{G}_{\sigma}$-conjugate. For this we first need the following lemma.

\begin{lemma}\label{lemma:sigmaprime}
We have the following:
	\begin{enumerate}[\normalfont (i)]
		\item $E_{\sigma} \leq \mathbf{G}_{\sigma}'$ if and only if $q \equiv 1 \mod{4}$. 
		\item $\widetilde{E}_{\sigma} \leq \mathbf{G}_{\sigma}'$ if and only if $q \equiv 3 \mod{4}$. 
	\end{enumerate}
\end{lemma}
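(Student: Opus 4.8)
The plan is to decide membership in $\mathbf{G}_{\sigma}' = (\mathbf{G}_{sc})_{\sigma}/Z(\mathbf{G}_{sc})$ by lifting to the simply connected group and invoking Lemma \ref{lemma:sigmaprimebasic}: an element $x \in \mathbf{G}_{\sigma}$ lies in $\mathbf{G}_{\sigma}'$ if and only if some (equivalently any) lift $y \in \mathbf{G}_{sc}$ of $x$ is $\sigma$-fixed. Since $E = \langle e, f\rangle$ and $\widetilde{E} = \langle \widetilde{e}, f\rangle$, and $N_{\mathbf{G}_{\sigma}}(E)$ acts transitively on $E \setminus \{1\}$ (so the three nontrivial elements are $\mathbf{G}_{\sigma}$-conjugate, hence either all lie in $\mathbf{G}_{\sigma}'$ or none do — likewise for $\widetilde{E}$), it suffices to test a single well-chosen element: $f$ itself. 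Because $f$ is a product of pairwise commuting Weyl representatives $w_{\alpha}$ with all $\alpha \in \Phi$ $\sigma$-fixed, the obvious lift $\hat f = w_{\alpha_1}w_{\alpha_2}w_{\alpha_5}w_{\alpha_7}w_{\alpha_{37}}w_{\alpha_{55}}w_{\alpha_{61}}$ to $\mathbf{G}_{sc}$ is manifestly $\sigma$-fixed, since $\sigma$ fixes each $w_{\alpha} = w_{\alpha}(1)$. Hence $f \in \mathbf{G}_{\sigma}'$ always — so this naive attempt with $f$ fails to separate the cases, and the dependence on $q \bmod 4$ must come from $e$ (resp. $\widetilde e$), not from $f$.

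So I would instead test $e$ for part (i) and $\widetilde e$ for part (ii). For (i): the lift $\hat e = h_{\alpha_2}(-\zeta^2)h_{\alpha_5}(\zeta^2)h_{\alpha_7}(-\zeta^2)h_{\alpha_6}(-1)$ to $\mathbf{G}_{sc}$ satisfies $\sigma(\hat e) = h_{\alpha_2}(-\zeta^{2q})h_{\alpha_5}(\zeta^{2q})h_{\alpha_7}(-\zeta^{2q})h_{\alpha_6}(-1)$. Now $\zeta$ is a primitive $8$th root of unity, so $\zeta^2$ is a primitive $4$th root of unity, and $\zeta^{2q} = \zeta^2$ exactly when $q \equiv 1 \bmod 4$, while $\zeta^{2q} = \zeta^{-2} = -\zeta^2$ when $q \equiv 3 \bmod 4$. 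In the latter case $\sigma(\hat e)\hat e^{-1} = h_{\alpha_2}(-1)h_{\alpha_5}(-1)h_{\alpha_7}(-1)$, which is precisely the generator of $Z(\mathbf{G}_{sc})$ (as recorded in \ref{ss:2p3}), hence nontrivial; so $\hat e \notin (\mathbf{G}_{sc})_{\sigma}$. But I must be careful: Lemma \ref{lemma:sigmaprimebasic} says $e \in \mathbf{G}_{\sigma}'$ iff \emph{some} lift is $\sigma$-fixed, and the two lifts of $e$ are $\hat e$ and $z\hat e$ with $z$ the central involution; since $\sigma(z\hat e)(z\hat e)^{-1} = \sigma(\hat e)\hat e^{-1}$ (as $z$ is $\sigma$-fixed and central), both lifts have the same $\sigma$-defect. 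Thus $e \in \mathbf{G}_{\sigma}'$ iff $q \equiv 1 \bmod 4$, giving (i).

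For (ii), the element $\widetilde e = x^{-1} e x$ where $f = \sigma(x)x^{-1}$ with $x \in C_{\mathbf{G}}(f)^\circ$. By Lemma \ref{lemma:torusactions}(ii), $\sigma$ acts on the torus $x^{-1}\mathbf{T}x$ of $C_{\mathbf{G}}(\widetilde e)^\circ$ via $t \mapsto t^{-q}$. Writing $\widetilde e$ in terms of the image of a diagonal matrix in $\mathbf{H}$ conjugated by $x$ — concretely $\widetilde e$ is the image of $x^{-1}$ applied to $h_{\alpha_2}(-\zeta^2)h_{\alpha_5}(\zeta^2)h_{\alpha_7}(-\zeta^2)h_{\alpha_6}(-1)$ — the lift $\widehat{\widetilde e}$ to $\mathbf{G}_{sc}$ satisfies $\sigma(\widehat{\widetilde e})\widehat{\widetilde e}^{-1}$ computed via the $t \mapsto t^{-q}$ action: the relevant torus element gets raised to the $-q$ power, so $\zeta^2 \mapsto \zeta^{-2q}$, which equals $\zeta^2$ precisely when $q \equiv 3 \bmod 4$ and equals $\zeta^{-2} = -\zeta^2$ when $q \equiv 1 \bmod 4$. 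In the case $q\equiv 1$, the defect is again the central involution $h_{\alpha_2}(-1)h_{\alpha_5}(-1)h_{\alpha_7}(-1) \neq 1$ in $\mathbf{G}_{sc}$. Hence $\widetilde e \in \mathbf{G}_{\sigma}'$ iff $q \equiv 3 \bmod 4$, which is (ii). To make this rigorous I need that conjugation by $x$ sends the standard lift $\hat e$ to a genuine lift of $\widetilde e$ in $\mathbf{G}_{sc}$ and intertwines the two $\sigma$-actions correctly; this is where the Lang–Steinberg choice of $x$ and the compatibility of the isogeny $\mathbf{G}_{sc} \to \mathbf{G}$ with $\sigma$ enter, and I expect this bookkeeping — tracking how the twisting element $x$ interacts with the central kernel — to be the main technical obstacle, though conceptually it is just the statement that the $\sigma$-action on $C_{\mathbf{G}}(\widetilde e)^\circ$ is the $q \mapsto -q$ twist of the action on $C_{\mathbf{G}}(e)^\circ$, already recorded in Lemma \ref{lemma:torusactions}.
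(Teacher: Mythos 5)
Your proof is correct and follows essentially the same route as the paper's: reduce to the generators, observe that $f$ lifts to a $\sigma$-fixed element of $\mathbf{G}_{sc}$ so $f \in \mathbf{G}_{\sigma}'$ always, and then decide membership of $e$ (resp.\ $\widetilde{e}$) by noting that its lift has order $4$ and is sent by $\sigma$ to its $q$th (resp.\ $(-q)$th) power via Lemma \ref{lemma:torusactions}; your explicit computation with $\zeta^2$ and the central defect $h_{\alpha_2}(-1)h_{\alpha_5}(-1)h_{\alpha_7}(-1)$ is just an unpacked version of that. One caveat: the parenthetical claim in your opening that $N_{\mathbf{G}_{\sigma}}(E)$ acts transitively on $E \setminus \{1\}$, so that the three involutions ``either all lie in $\mathbf{G}_{\sigma}'$ or none do,'' is false for the conjugate not contained in $\mathbf{G}_{\sigma}'$ --- indeed the lemma itself shows that for $q \equiv 3 \bmod 4$ one has $E \cap \mathbf{G}_{\sigma}' = \langle f \rangle$, a proper nontrivial subgroup, and the paper later uses exactly this to note that $N_{\mathbf{G}_{\sigma}}(E)$ then centralizes $f$. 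You abandon that line before relying on it, so the final argument is unaffected, but the sentence should be deleted rather than left standing as a true statement.
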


\begin{proof}
First note that $f$ lifts to an element of $\mathbf{G}_{sc}$ fixed by $\sigma$ (since $w_{\alpha}$ is always fixed by $\sigma$), so by Lemma \ref{lemma:sigmaprimebasic} we have $f \in \mathbf{G}_{\sigma}'$. Thus for containment in $\mathbf{G}_{\sigma}'$, we just need to look at the other generators $e$ and $\widetilde{e}$.

The lift of $e$ is an element $e_1$ of $\mathbf{G}_{sc}$ such that $e_1$ has order $4$, and $\sigma(e_1) = e_1^q$ (Lemma \ref{lemma:torusactions}). We have $e_1^q = e_1$ if and only if $q \equiv 1 \mod{4}$, so (i) holds (Lemma \ref{lemma:sigmaprimebasic}).

Similarly note that the lift of $\widetilde{e}$ is an element $e_2$ of $\mathbf{G}_{sc}$ such that $e_2$ has order $4$, and $\sigma(e_2) = e_2^{-q}$ (Lemma \ref{lemma:torusactions}). We have $e_2^{-q} = e_2$ if and only if $q \equiv 3 \mod{4}$, so (ii) holds (Lemma \ref{lemma:sigmaprimebasic}).
\end{proof}

\subsection{} It follows from Lemma \ref{lemma:sigmaprime} that $E$ and $\widetilde{E}$ are not conjugate in $\mathbf{G}_{\sigma}$. (Since one is contained in $\mathbf{G}_{\sigma}'$, and the other is not.) Thus the identity element $1$ and $f$ correspond to different $\sigma$-classes in $H^1(\sigma, N_{\mathbf{G}}(E) / N_{\mathbf{G}}(E)^\circ)$, so from the discussion in \ref{ss:2p6} we conclude that $\sigma$ acts on $N_{\mathbf{G}}(E) / N_{\mathbf{G}}(E)^\circ$ as conjugation by $f$ or as conjugation by $1$.

\subsection{} We are interested in the $\sigma$-invariant conjugate of $E$ such that the structure of $N_{\mathbf{G}_{\sigma}}(E)$ is as stated in Theorem \ref{thm:mainthm}. For this, among $E$ and $\widetilde{E}$ the right conjugate is the one contained in $\mathbf{G}_{\sigma}'$. Indeed, if for example $E$ is not contained in $\mathbf{G}_{\sigma}'$, then $N_{\mathbf{G}_{\sigma}}(E)$ normalizes $E \cap \mathbf{G}_{\sigma}' = \langle f \rangle$, and thus centralizes $f$. In this case $N_{\mathbf{G}_{\sigma}}(E)$ is contained in a subgroup of type $A_7$, which is not the case for the subgroup appearing in Theorem \ref{thm:mainthm}.

\subsection{}\label{ss:sigmaaction} Let $g \in C_{\mathbf{G}}(e)^\circ$ be an element corresponding to the diagonal matrix $$\operatorname{diag}(\sqrt{\zeta}, \sqrt{\zeta}, \ldots, \sqrt{\zeta}, -\sqrt{\zeta}).$$ Then $g \in \mathbf{T}$, and $g^2 = e$. Moreover $g$ is inverted by $f$ (Lemma \ref{lemma:faction} (i)), so \begin{align*} geg^{-1} &= e, \\ gfg^{-1} &= ef, \\ g(ef)g^{-1} &= f. \end{align*} Thus $g \in N_{\mathbf{G}}(E)$ and $g$ corresponds to an involution in $N_{\mathbf{G}}(E)/C_{\mathbf{G}}(E) \cong \operatorname{Sym}_3$ that swaps $f$ and $ef$.

If $q \equiv 1 \mod{4}$, then $$\sigma(g)^{-1} f g = g^{-q} f g = g^{-1} f g = ef.$$ Thus $f$ and $ef$ correspond to the same $\sigma$-class in $H^1(\sigma, N_{\mathbf{G}}(E) / N_{\mathbf{G}}(E)^\circ)$, and so $\sigma$ acts trivially on $N_{\mathbf{G}}(E) / N_{\mathbf{G}}(E)^\circ$ in this case.

If $q \equiv 3 \mod{4}$, then $$\sigma(g)^{-1}g = g^{-q+1} = g^2 = e.$$ Then $1$ and $e$ correspond to the same $\sigma$-class in $H^1(\sigma, N_{\mathbf{G}}(E) / N_{\mathbf{G}}(E)^\circ)$. Therefore $\sigma$ acts on $N_{\mathbf{G}}(E) / N_{\mathbf{G}}(E)^\circ$ by conjugation with $f$, and thus $\sigma$ acts trivially on $N_{\mathbf{G}}(\widetilde{E}) / N_{\mathbf{G}}(\widetilde{E})^\circ$.

\subsection{} Now define $E^+ = E$, and $E^{-} = \widetilde{E}$. To summarize, first we have the following result.

\begin{prop}\label{prop:Esigmastructure}
Let $\varepsilon = \pm$ such that $q \equiv \varepsilon \mod{4}$. Then the following hold.

	\begin{enumerate}[\normalfont (i)]
		\item $\sigma$ acts trivially on $N_{\mathbf{G}}(E^{\varepsilon})/N_{\mathbf{G}}(E^{\varepsilon})^\circ$.
		\item $C_{\mathbf{G}_{\sigma}}(E^\varepsilon) = E^\varepsilon \times \operatorname{Inndiag}(D_4(q))$.
		\item $N_{\mathbf{G}_{\sigma}}(E^\varepsilon) / C_{\mathbf{G}_{\sigma}}(E^\varepsilon) = \operatorname{Sym}_3$.
	\end{enumerate}

\end{prop}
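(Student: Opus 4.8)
The plan is to deduce all three statements from the cocycle computations carried out in \ref{ss:sigmaaction}, combined with the exact sequence of $\sigma$-fixed points attached to $1 \to N_{\mathbf{G}}(E^\varepsilon)^\circ \to N_{\mathbf{G}}(E^\varepsilon) \to N_{\mathbf{G}}(E^\varepsilon)/N_{\mathbf{G}}(E^\varepsilon)^\circ \to 1$. The key structural input is that $N_{\mathbf{G}}(E^\varepsilon)^\circ = C_{\mathbf{G}}(E^\varepsilon)^\circ = D_4$ is connected, so $H^1(\sigma, N_{\mathbf{G}}(E^\varepsilon)^\circ) = 1$ by the Lang--Steinberg theorem, which forces the fixed-point sequence to be short exact and controls the component group.

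First, part (i) is essentially a restatement of what was established in \ref{ss:sigmaaction}: there one sees that for $q \equiv 1 \bmod 4$ the endomorphism $\sigma$ acts trivially on $N_{\mathbf{G}}(E)/N_{\mathbf{G}}(E)^\circ$, while for $q \equiv 3 \bmod 4$ it acts on $N_{\mathbf{G}}(E)/N_{\mathbf{G}}(E)^\circ$ as conjugation by $f$ and hence trivially on $N_{\mathbf{G}}(\widetilde E)/N_{\mathbf{G}}(\widetilde E)^\circ$. Since $E^+ = E$ and $E^- = \widetilde E$, this is exactly the assertion of (i).

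For (ii) and (iii) I would proceed as follows. Since $E^\varepsilon$ is fixed pointwise by $\sigma$, the decomposition $C_{\mathbf{G}}(E^\varepsilon) = E^\varepsilon \times D_4$ is $\sigma$-stable factor by factor, so $C_{\mathbf{G}_\sigma}(E^\varepsilon) = E^\varepsilon \times (D_4)_\sigma$. To pin down the $\F_q$-form of the adjoint group $D_4$: for $\varepsilon = +$ one checks that $\sigma$ restricts to the standard split $q$-power Frobenius on $C_{\mathbf{G}}(e)^\circ = \SL_8(K)/\langle \zeta^2 I_8 \rangle$, and therefore, via Lemma \ref{lemma:faction}(i), induces a split Frobenius on the $\sigma$-stable adjoint subgroup $D_4 = C_{\mathbf{G}}(E)^\circ$, giving $(D_4)_\sigma = \operatorname{Inndiag}(D_4(q))$; for $\varepsilon = -$ one writes $\widetilde E = x^{-1}Ex$ with $f = \sigma(x)x^{-1}$ and $x \in C_{\mathbf{G}}(f)^\circ$, and uses that $f \in E^\varepsilon$ centralizes the $D_4$-factor of $C_{\mathbf{G}}(E) = E \times D_4$ to obtain $\sigma(x^{-1} d x) = x^{-1}\sigma(d) x$ for $d \in D_4$, so that the $D_4$-factor of $C_{\mathbf{G}_\sigma}(\widetilde E)$ equals $x^{-1}(D_4)_\sigma x \cong \operatorname{Inndiag}(D_4(q))$ as well. (Alternatively, one could invoke \cite{CLSS1992} directly, once (i) is used to see that $E^\varepsilon$ is the unique $\sigma$-invariant conjugate of $E$, up to $\mathbf{G}_\sigma$-conjugacy, on whose component group $\sigma$ acts trivially.) Writing $N = N_{\mathbf{G}}(E^\varepsilon)$, Proposition \ref{prop:Eproperties} gives $N^\circ = D_4$, $N/N^\circ \cong \Sym_4$, and $C_{\mathbf{G}}(E^\varepsilon)/N^\circ$ is the normal subgroup of $\Sym_4$ of index $|\Sym_3| = 6$, i.e. the unique normal Klein four subgroup $V$. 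Using (i) and $H^1(\sigma, N^\circ) = 1$, the long exact cohomology sequence then produces short exact sequences $1 \to (N^\circ)_\sigma \to N_\sigma \to \Sym_4 \to 1$ and $1 \to (N^\circ)_\sigma \to C_{\mathbf{G}_\sigma}(E^\varepsilon) \to V \to 1$; dividing one by the other yields $N_{\mathbf{G}_\sigma}(E^\varepsilon)/C_{\mathbf{G}_\sigma}(E^\varepsilon) \cong \Sym_4/V \cong \Sym_3$, which is (iii), and together with the identification of $(D_4)_\sigma$ this gives (ii).

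The step I expect to be the main obstacle is the identification of the $\F_q$-form of the adjoint $D_4$ in part (ii) --- especially in the case $\varepsilon = -$, where $\sigma$ acts on the maximal torus of the ambient $\SL_8$ by $t \mapsto t^{-q}$ (Lemma \ref{lemma:torusactions}(ii)) and one must verify that this nonetheless yields the split $D_4$; this holds because $f$ centralizes the $D_4$-factor of $C_{\mathbf{G}}(E)$, or, looking only at the torus, because $-1$ lies in the Weyl group of $D_4$. Once that is settled, the remainder is routine manipulation of the component group $\Sym_4$ together with the Lang--Steinberg theorem.
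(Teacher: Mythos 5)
Your proposal is correct and follows essentially the same route as the paper: part (i) is read off from \ref{ss:sigmaaction}, the $D_4$-factor is identified using that $f=\sigma(x)x^{-1}$ centralizes $N_{\mathbf{G}}(E)^\circ = D_4$ so that conjugation by $x$ intertwines the $\sigma$-actions, and (ii)--(iii) follow from the Lang--Steinberg isomorphism $N_\sigma/(N^\circ)_\sigma \cong (N/N^\circ)_\sigma \cong \Sym_4$ together with the identification of $C_{\mathbf{G}_\sigma}(E^\varepsilon)$ with the preimage of the normal Klein four subgroup. Your extra check that the resulting form of $D_4$ is split (via $-1$ lying in the Weyl group of $D_4$) is a harmless elaboration of a point the paper leaves implicit.
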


\begin{proof}
Claim (i) was established in \ref{ss:sigmaaction}. For (ii) and (iii), note that $$\left(N_{\mathbf{G}}(\tilde{E})^\circ\right)_{\sigma} = x \left(N_{\mathbf{G}}(E)^\circ\right)_{\sigma} x^{-1}$$ since $f = \sigma(x)x^{-1}$ centralizes $N_{\mathbf{G}}(E)^\circ = D_4$. Moreover $N_{\mathbf{G}}(E)^\circ = D_4$ is of adjoint type (Proposition \ref{prop:Eproperties} (ii)), so $$\left(N_{\mathbf{G}}(E^{\varepsilon})^\circ\right)_{\sigma} = \operatorname{Inndiag}(D_4(q)).$$ By (i) and Proposition \ref{prop:Eproperties} (v) we have $$N_{\mathbf{G}}(E^{\varepsilon})_{\sigma}/\left(N_{\mathbf{G}}(E^{\varepsilon})^\circ\right)_{\sigma} \cong \left(N_{\mathbf{G}}(E^{\varepsilon})/N_{\mathbf{G}}(E^{\varepsilon})^\circ\right)_{\sigma} \cong E^{\varepsilon} \rtimes \Sym_3 \cong \Sym_4.$$ (Here the first isomorphism holds for example by \cite[Proposition 23.2]{MalleTesterman}.) 

Therefore $N_{\mathbf{G}_{\sigma}}(E^\varepsilon) = \operatorname{Inndiag}(D_4(q)).\Sym_4$, from which claims (ii) and (iii) follow.
\end{proof}

In Table \ref{table:sigma}, we have given the structure of $N_{\mathbf{G}}(F)_{\sigma}$ for all $\sigma$-invariant conjugates $F$ of $E$. In the table, we identify $N_{\mathbf{G}}(E^{\varepsilon})/N_{\mathbf{G}}(E^{\varepsilon})^\circ = \Sym_4 = E^{\varepsilon} \rtimes \Sym_3$.

The structures in Table \ref{table:sigma} are found similarly to Proposition \ref{prop:Esigmastructure}. For example, in the case of $[(1,2)]$, the action of $\sigma$ on $N_{\mathbf{G}}(F) = D_4$ is by a graph automorphism of order $2$, so $\left(N_{\mathbf{G}}(F)^\circ\right)_{\sigma} = \operatorname{Inndiag}({}^2D_4(q)) = {}^2D_4(q).2$. 

Furthermore the centralizer of $(1,2)$ in $\Sym_4$ is elementary abelian of order $4$, so $N_{\mathbf{G}}(F)_{\sigma}/\left(N_{\mathbf{G}}(F)^\circ\right)_{\sigma} = 2^2$. There is a unique involution in $F$ fixed by $\sigma$, so we conclude that $$N_{\mathbf{G}}(F)_{\sigma} = \left(2 \times {}^2D_4(q).2\right)\!.2$$ holds.

\begin{remark}
As noted by Craven in \cite[Section 4, pp. 15--16]{CravenE7}, the subgroups in Table \ref{table:sigma} corresponding to $[(1,2)]$, $[(1,2)(3,4)]$, and $[(1,2,3,4)]$ do not produce maximal subgroups of $E_7(q)$, since all of them centralize an involution in $F$. Meanwhile $[(1,2,3)]$ corresponds to maximal subgroup of $E_7(q)$ with structure ${}^3D_4(q).3$, as observed in \cite[Section 4, p. 16]{CravenE7}.
\end{remark}

\begin{table}[!htbp]
	\centering
	\caption{Structure of $N_{\mathbf{G}}(F)_{\sigma}$ for $\sigma$-invariant conjugates of $E$. In the first column we denote the element of $H^1(\sigma,N_{\mathbf{G}}(E^{\varepsilon})/N_{\mathbf{G}}(E^{\varepsilon})^\circ)$ corresponding to $F$, where we have identified $N_{\mathbf{G}}(E^{\varepsilon})/N_{\mathbf{G}}(E^{\varepsilon})^\circ = \Sym_4$.}\label{table:sigma}
	\begin{tabular}{|l|l|}
		\hline
		& \\[-9pt]
		Class in $H^1(\sigma,N_{\mathbf{G}}(E^{\varepsilon})/N_{\mathbf{G}}(E^{\varepsilon})^\circ)$ & Structure of $N_{\mathbf{G}}(F)_{\sigma}$ \\[2pt] \hline
		& \\[-9pt]
		$[(1)]$      & $\left(2^2 \times \operatorname{Inndiag}(D_4(q))\right)\!.\Sym_3$ \\
		$[(1,2)]$     & $\left(2 \times {}^2D_4(q).2\right)\!.2$ \\
		$[(1,2)(3,4)]$ & $\left(2^2 \times \operatorname{Inndiag}(D_4(q))\right)\!.2$ \\
		$[(1,2,3)]$    & ${}^3D_4(q).3$ \\
		$[(1,2,3,4)]$   & $\left({}^2D_4(q).2\right)\!.4$ \\[2pt]
    \hline
	\end{tabular}
\end{table}

\section{Proof of Theorem \ref{thm:mainthm}}

We will now prove Theorem \ref{thm:mainthm}, which is a consequence of the following result.

\begin{lause}
Let $\varepsilon = \pm$ such that $q \equiv \varepsilon \mod{4}$. Then the following hold:
	\begin{enumerate}[\normalfont (i)]
	\item $C_{\mathbf{G}_{\sigma}}(E^{\varepsilon}) \leq \mathbf{G}_{\sigma}'$.
	\item $$N_{\mathbf{G}_{\sigma}'}(E^{\varepsilon}) = \begin{cases} C_{\mathbf{G}_{\sigma}}(E^{\varepsilon}).\operatorname{Sym}_3, & \text{ if } q \equiv \varepsilon \mod{8}. \\ C_{\mathbf{G}_{\sigma}}(E^{\varepsilon}).3, & \text{ if } q \equiv \varepsilon+4 \mod{8}.\end{cases}$$
	\end{enumerate}
\end{lause}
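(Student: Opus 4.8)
The plan is to analyze separately the two generators and the product. By Proposition \ref{prop:Esigmastructure} we already know that $C_{\mathbf{G}_{\sigma}}(E^{\varepsilon}) = E^{\varepsilon} \times \operatorname{Inndiag}(D_4(q))$ and that $N_{\mathbf{G}_{\sigma}}(E^{\varepsilon})/C_{\mathbf{G}_{\sigma}}(E^{\varepsilon}) = \operatorname{Sym}_3$, so the whole question is: which elements of $N_{\mathbf{G}_{\sigma}}(E^{\varepsilon})$ lie in $\mathbf{G}_{\sigma}'$? For part (i), by Lemma \ref{lemma:sigmaprimebasic} it suffices to check that each generator of $C_{\mathbf{G}_{\sigma}}(E^{\varepsilon})$ lifts to a $\sigma$-fixed element of $\mathbf{G}_{sc}$. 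The $\operatorname{Inndiag}(D_4(q))$ factor is handled exactly as in the introduction: its preimage in $\mathbf{G}_{sc}$ is again of adjoint type, so the factor lies in $\mathbf{G}_{\sigma}'$. For $E^{\varepsilon}$ itself, one of its nontrivial elements is $f$, which always lifts to a $\sigma$-fixed element (the Weyl representative $w_\alpha$ is $\sigma$-fixed); the remaining two nontrivial elements of $E^{\varepsilon}$ are of the form $e^{\pm}$ and $e^{\pm}f$, and by Lemma \ref{lemma:sigmaprime} the element $e^{\varepsilon}$ lies in $\mathbf{G}_{\sigma}'$ precisely when $q \equiv \varepsilon \bmod 4$, which is our standing hypothesis. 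Since $f \in \mathbf{G}_{\sigma}'$ and $e^{\varepsilon} \in \mathbf{G}_{\sigma}'$, the product $e^{\varepsilon}f$ is too, and hence all of $E^{\varepsilon} \leq \mathbf{G}_{\sigma}'$. Combined with the $D_4$ factor this gives (i).

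For part (ii), the index-two subgroup $\mathbf{G}_{\sigma}'$ is the kernel of the "diagonal" homomorphism $\mathbf{G}_{\sigma} \to \mathbf{G}_{\sigma}/\mathbf{G}_{\sigma}' \cong \Z/2$, and by Lemma \ref{lemma:sigmaprimebasic} this homomorphism is computed by lifting to $\mathbf{G}_{sc}$ and measuring the obstruction to the lift being $\sigma$-fixed (equivalently, the connecting map in the nonabelian cohomology sequence for $1 \to Z(\mathbf{G}_{sc}) \to \mathbf{G}_{sc} \to \mathbf{G} \to 1$). So I would pick a representative $n \in N_{\mathbf{G}_{\sigma}}(E^{\varepsilon})$ mapping to a generator of the $\operatorname{Sym}_3$ quotient that fixes one of the three involutions of $E^{\varepsilon}$ and swaps the other two — e.g. the element $g$ of \ref{ss:sigmaaction} (or its conjugate under $x$ in the $\varepsilon = -$ case), which satisfies $g^2 = e^{\varepsilon}$ — and likewise a representative $m$ of order dividing $3$ cycling the three involutions. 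The $3$-element is never an obstruction: its square class in $\mathbf{G}_{\sigma}/\mathbf{G}_{\sigma}'$ is a $3$-torsion element of a $2$-group, hence trivial, so $m \in \mathbf{G}_{\sigma}'$ always; this already gives the lower bound $N_{\mathbf{G}_{\sigma}'}(E^{\varepsilon}) \geq C_{\mathbf{G}_{\sigma}}(E^{\varepsilon}).3$. The entire content of (ii) is therefore whether the reflection-type element lies in $\mathbf{G}_{\sigma}'$. Using $g^2 = e^{\varepsilon}$: lifting $g$ to $\widetilde g \in \mathbf{G}_{sc}$, we get $\widetilde g^2$ equal to a fixed lift of $e^{\varepsilon}$ up to $Z(\mathbf{G}_{sc})$; since $e^{\varepsilon}$ lifts to an element of order $4$ in $\mathbf{G}_{sc}$, one has $\widetilde g^4 = \widetilde{e^{\varepsilon}}{}^2 \in Z(\mathbf{G}_{sc}) \setminus \{1\}$, so $\widetilde g$ has order $8$ in the torus $\mathbf{T}_{sc}$ (resp. $x^{-1}\mathbf{T}_{sc}x$). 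Then $\sigma(\widetilde g)\widetilde g^{-1}$ is computed from Lemma \ref{lemma:torusactions}: it equals $\widetilde g^{q-1}$ in the $\varepsilon=+$ case and $\widetilde g^{-q-1}$ in the $\varepsilon=-$ case. Since $\widetilde g$ has order $8$, this obstruction is trivial exactly when $8 \mid q-1$ (resp. $8 \mid q+1$), i.e. when $q \equiv \varepsilon \bmod 8$, and nontrivial (equal to the generator $\widetilde g^4$ of $Z(\mathbf{G}_{sc})$) when $q \equiv \varepsilon + 4 \bmod 8$. Feeding this back through Lemma \ref{lemma:sigmaprimebasic} gives $g \in \mathbf{G}_{\sigma}'$ iff $q \equiv \varepsilon \bmod 8$.

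Putting the pieces together: when $q \equiv \varepsilon \bmod 8$ we have both the $3$-element and the reflection-type element inside $\mathbf{G}_{\sigma}'$, so $N_{\mathbf{G}_{\sigma}'}(E^{\varepsilon}) = C_{\mathbf{G}_{\sigma}}(E^{\varepsilon}).\operatorname{Sym}_3 = N_{\mathbf{G}_{\sigma}}(E^{\varepsilon})$; when $q \equiv \varepsilon + 4 \bmod 8$ the reflection is outside $\mathbf{G}_{\sigma}'$, so $N_{\mathbf{G}_{\sigma}'}(E^{\varepsilon})$ has index $2$ in $N_{\mathbf{G}_{\sigma}}(E^{\varepsilon})$ and equals $C_{\mathbf{G}_{\sigma}}(E^{\varepsilon}).3$. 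Theorem \ref{thm:mainthm} is then immediate by taking $E = E^{\varepsilon}$, which by the discussion preceding \ref{ss:sigmaaction} is the $\sigma$-invariant conjugate with the stated centralizer structure. The main obstacle I anticipate is purely bookkeeping: one must track the element $g$ (and the order-$3$ element) carefully through the simply connected cover, making sure the relation $g^2 = e^{\varepsilon}$ lifts to the correct relation $\widetilde g^2 = \widetilde{e^{\varepsilon}}$ modulo $Z(\mathbf{G}_{sc})$ and that $\widetilde g$ genuinely has order $8$ rather than $4$ — this is where the hypothesis that $e^{\varepsilon}$ is of type $A_7$ (hence lifts to order $4$, per \ref{ss:involutions}) is essential, and it is the one place a sign or parity slip would change the congruence condition.
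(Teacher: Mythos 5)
Your proposal is correct and follows essentially the same route as the paper: part (i) via the adjoint-type lift of the $D_4$ factor together with Lemma \ref{lemma:sigmaprime} for $E^{\varepsilon}$, and part (ii) by reducing to the single reflection-type representative $g$ (or $x^{-1}gx$), lifting it to an element of order $8$ in $\mathbf{G}_{sc}$, and reading off the congruence $q \equiv \varepsilon \bmod 8$ from Lemmas \ref{lemma:torusactions} and \ref{lemma:sigmaprimebasic}. The only cosmetic difference is that you make explicit the (correct) observation that the index-$3$ part is never an obstruction, which the paper absorbs into the identity $N_{\mathbf{G}_{\sigma}}(E^{\varepsilon}) = \langle N_{\mathbf{G}_{\sigma}'}(E^{\varepsilon}), y\rangle$.
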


\begin{proof}
For claim (i), first note that $E^{\varepsilon} \leq \mathbf{G}_{\sigma}'$ by Lemma \ref{lemma:sigmaprime}. Next we consider the $Y = D_4$ factor in $C_{\mathbf{G}}(E^\varepsilon) = E^{\varepsilon} \times Y$, which is of adjoint type. The lift of $Y$ into $\mathbf{G}_{sc}$ is also of adjoint type, from which it follows that $Y_{\sigma} \leq \mathbf{G}_{\sigma}'$ (Lemma \ref{lemma:sigmaprimebasic}). Thus $C_{\mathbf{G}_{\sigma}}(E^{\varepsilon}) = E^{\varepsilon} \times Y_{\sigma} \leq \mathbf{G}_{\sigma}'$.

For (ii), let $g \in \mathbf{T}$ be the element of $N_{\mathbf{G}}(E)$ defined in \ref{ss:sigmaaction}, so $g^2 = e$ and $g$ acts on $E$ by swapping $f$ and $ef$. Define $$y := \begin{cases} g, & \text{ if } q \equiv 1 \mod{4}, \\ x^{-1} g x, & \text{ if } q \equiv 3 \mod{4}.\end{cases}$$ Then $y \in N_{\mathbf{G}}(E^{\varepsilon})$. By Lemma \ref{lemma:torusactions} we have $\sigma(y) = y^{q \varepsilon} = y$ since $y$ has order $4$ and $q \varepsilon \equiv 1 \mod{4}$. Thus $y \in N_{\mathbf{G}_{\sigma}}(E^{\varepsilon})$. 

Combining this with $C_{\mathbf{G}_{\sigma}}(E^{\varepsilon}) \leq \mathbf{G}_{\sigma}'$, it follows that $$N_{\mathbf{G}_{\sigma}}(E^{\varepsilon}) = \langle N_{\mathbf{G}_{\sigma}'}(E^{\varepsilon}), y \rangle.$$ Then claim (ii) will follow once we prove that $y \in \mathbf{G}_{\sigma}'$ if and only if $q \equiv \varepsilon \mod{8}$. To this end, let $y'$ be a lift of $y$ into $\mathbf{G}_{sc}$. Then $y'$ has order $8$, so by Lemma \ref{lemma:torusactions} we have $$\sigma(y') = (y')^{\varepsilon q} = \begin{cases} y', & \text{ if } q \equiv \varepsilon \mod{8}, \\ (y')^5, & \text{ if } q \equiv \varepsilon+4 \mod{8}.\end{cases}$$ Thus by Lemma \ref{lemma:sigmaprimebasic}, we have $y \in \mathbf{G}_{\sigma}'$ if and only if $q \equiv \varepsilon \mod{8}$.
\end{proof}


\end{document}